\newtheorem{df}{Definition}[section]
\newtheorem{thm}[df]{Theorem}
\newtheorem{cor}[df]{Corollary}
\newtheorem{prop}[df]{Proposition}
\newtheorem{lem}[df]{Lemma}
\newtheorem{rem}[df]{Remark}
\newcommand{\Cl}[1]{\operatorname{Cl}_{#1}}
\newcommand{\Int}[1]{\operatorname{Int}_{#1}}
\newcommand{\Fr}[1]{\operatorname{Fr}_{#1}}
\newcommand{\Ext}[1]{\operatorname{Ext}_{#1}}
\newcommand{\Diam}[1]{\operatorname{diam}_{#1}}
\newcommand{\pois}{\smallsetminus}
\newcommand{\St}[2]{\operatorname{St}({#1},\mathcal{{#2}})}
\newcommand{\mapright}[1]{%
\smash{\mathop{%
\hbox to 1cm{\rightarrowfill}}\limits^{#1}}}
\begin{document}
 
\sloppy

\title%[]
{Connected properties of the sublinear Higson corona of $\Bbb R^n$}
 
\author{Yuji Akaike}

\date{\today}

\address{National Institute of Technology, Kure College,
2-2-11 Aga-Minami Kure-shi Hiroshima 737-8506, Japan}
 
\email{akaike@kure-nct.ac.jp}

\keywords{mutual aposyndesis, sublinear Higson compactification.}
%Stone-{\v C}ech compactification

\subjclass[2020]{Primary 54D35, 54D40; Secondary 54D05.}

\begin{abstract}
In \cite{AT}, we showed that the sublinear Higson corona
 of the half open interval $[0,\infty)$ with the usual metric is an indecomposable continuum.
In this paper, for the $n$-dimensional Euclidean space $\Bbb R^n$ with the usual metric and $n\geq2$,
 we show that the sublinear Higson corona of $\Bbb R^n$ is not locally connected at any point
 and is mutually aposyndetic.
\end{abstract}

\maketitle 

\section{Introduction}

%We follow the notation and terminology of \cite{Eng}.
For a topological space $X$ and a subspace $Y\subset X$,
the closure, the interior, and the boundary is denoted by $\Cl{X}Y$, $\Int{X}Y$, and $\Fr{X}Y$, respectively.
For a metric space $(X,d)$, let 
$B_d(x,r)=\{y\in X~|~d(x,y)<r\}$
 and $N_d(x,r)=\{y\in X~|~d(x,y)\leq r\}$ for any $x\in X$ and $r>0$.
A metric space $(X,d)$ is called {\it proper} if 
$N_d(x,r)$ is compact for all $x\in X$ and $r>0$.
In this paper, all metric spaces are assumed to be noncompact proper
 with a base point $x_0\in X$, and $|x|=d(x,x_0)$ for each $x\in X$.
For a subset $A$ of a metric space $(X,d)$, the diameter of $A$ is denoted by $\Diam{d}A$.
Unless explicitly stated to the contrary, we assume that all maps are continuous.

Recall that the notions of Higson compactification and its remainder, Higson corona,
 play an important role in large-scale geometry \cite{Roe}.
Later, the sublinear Higson compactification, the subpower Higson compactification 
 and these coronas are introduced 
 and investigated in \cite{CDSV}, \cite{DS}, \cite{KZ1} as a counterpart of the Higson compactification.
Note that these compactifications are metric-dependent.
In \cite{AT}, perfectness of the sublinear Higson compactification is characterized and
 is proved that the sublinear Higson corona of the half open interval $[0,\infty)$ with the usual metric
 is an indecomposable continuum.

On the other hand,
The concept of aposyndesis is introduced in \cite{Jones}.
Aposyndesis and mutual aposyndesis are analogues of
 separation axioms $T_1$ and $T_2$, respectively,
 which are used to understand differences of continua. 
%See Chapter \ref{mut-ap.chap} below for definitions.
It is known that every locally connected continuum is aposyndetic and
 every aposyndetic continuum is decomposable (cf. \cite{Ma}).
Let $\Bbb R^n$ be the $n$-dimensional Euclidean space. 
In \cite{Bella}, Bellamy showed that, for each $n\geq2$, 
 the Stone-\v{C}ech remainder $\beta\Bbb R^n\pois\Bbb R^n$ is aposyndetic,
 and five decades later, he proved in \cite{Bella2} that $\beta\Bbb R^n\pois\Bbb R^n$
 is mutually aposyndetic for each $n\geq2$.
In \cite{Woods}, Woods proved that if $X$ is realcompact and noncompact,
 then $\beta X\pois X$ is not connected im kleinen at any point.
Hence $\beta\Bbb R^n\pois\Bbb R^n$ is not locally connected at any point.
In \cite{Kees}, Keesling proved, using algebraic methods,
 that the Higson corona of $\Bbb R^n$ with the usual metric is not locally connected at any point.

In this paper, 
 for any proper metric space $(X,d)$,
 we show that the sublinear Higson corona of $X$ is
 not locally connected at any point.
As a result, the subpower Higson corona and the Higson corona of $X$
 are not locally connected at any point, respectively.
Using this, for the $n$-dimensional Euclidean space $\Bbb R^n$ with the usual metric and $n\geq2$,
 we conclude that the sublinear Higson corona of $\Bbb R^n$ is not locally connected at any point.
Moreover,  we prove that the sublinear Higson corona of $\Bbb R^n$ is mutually aposyndetic.

\section{Preliminaries}
Let $X$ be a completely regular space and $V$ an open subset of $X$.
For a compactification $\alpha X$ of $X$, we define
$$\Ext{\alpha X}V=\alpha X\pois \Cl{\alpha X}(X\pois V),$$
which is the largest open subset of $\alpha X$ whose intersection with $X$ is equal to $V$.
If there is no confusion, we denote $\tilde{V}$ instead of $\Ext{\alpha X}V$.

A compactification $\alpha X$ of a completely regular space $X$ is said to be {\it perfect}
if the natural projection from the Stone-\v{C}ech compactification $\beta X$ to $\alpha X$ is monotone.
Let $F$ be a subset of $\alpha X$.
We say that $F$ {\it separates} $\alpha X$ {\it at the point} $a\in F$
 if $a$ has a neighborhood $W$ such that $W\cap(\alpha X\pois F)=U\cup V$,
 where $U$ and $V$ are non empty open sets of $\alpha X\pois F$,
 $U\cap V=\emptyset$ and $a\in\Cl{\alpha X}U\cap\Cl{\alpha X}V$. 
The following basic facts of perfect compactifications are known (see \cite{KN} or \cite{Sk}).
\begin{prop}\label{prop:perfectcpt}
Let $\alpha X$ be a compactification of a noncompact completely regular space $X$.
Then the following conditions are equivalent$:$
\begin{itemize}
\item[$($1$)$] $\alpha X$ is a perfect compactification of $X$.
\item[$($2$)$] $\alpha X\pois X$ does not separate $\alpha X$ at any of its points.
%\item[$($3$)$] For every open subset $U$ of $X$ and every  $A\subset U$, $\Cl{\alpha X}A\cap\Cl{\alpha X}\Fr{X}U=\emptyset$
%               is equivalent to $\Cl{\alpha X}A\cap\Cl{\alpha X}(X\setminus U)=\emptyset$.
\item[$($3$)$] For every open set $U$ of $X$, $\Cl{\alpha X}\Fr{X}U=\Fr{\alpha X}\Ext{\alpha X}U$.%, 
               %where $\Ext{\alpha X}U=\alpha X\setminus \Cl{\alpha X}(X\setminus U)$.
\end{itemize}
\end{prop}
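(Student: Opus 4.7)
The plan is to establish the equivalence by proving $(2) \Leftrightarrow (3)$ directly inside $\alpha X$ and then $(1) \Leftrightarrow (2)$ through the natural projection $\pi \colon \beta X \to \alpha X$, using the general identities $\Cl{\alpha X}\Ext{\alpha X}U = \Cl{\alpha X}U$ and $\alpha X \pois \Ext{\alpha X}U = \Cl{\alpha X}(X \pois U)$ together with the fact that $\pi$ is closed, surjective, and restricts to the identity on $X$. Since fibers of $\pi$ over points of $X$ are singletons, monotonicity only constrains the fibers above $a \in \alpha X \pois X$.

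To prove $(2) \Leftrightarrow (3)$: if (3) fails at $a \in \Fr{\alpha X}\Ext{\alpha X}U \pois \Cl{\alpha X}\Fr{X}U$, I first observe $a \in \alpha X \pois X$ (otherwise $a \in \Fr{X}U \subseteq \Cl{\alpha X}\Fr{X}U$) and take a neighborhood $W$ of $a$ avoiding $\Fr{X}U$; then $W \cap X = (W \cap U) \sqcup (W \cap (X \pois \Cl{X}U))$ splits into two disjoint nonempty open subsets of $X$, each with $a$ in its $\alpha X$-closure via the identities above, which is exactly a separation in the sense of (2). Conversely, a separation $W \cap X = U \sqcup V$ at $a$ renders $U$ and $V$ clopen in $W \cap X$, forcing $\Cl{X}U \cap V = \Cl{X}V \cap U = \emptyset$ and so $\Fr{X}U \subseteq X \pois W$, while $a \in \Cl{\alpha X}U \cap \Cl{\alpha X}V \subseteq \Cl{\alpha X}U \cap \Cl{\alpha X}(X \pois U) = \Fr{\alpha X}\Ext{\alpha X}U$, contradicting (3).

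For $(2) \Rightarrow (1)$ I argue contrapositively: a disconnected fiber $\pi^{-1}(a) = A \sqcup B$, with $A, B$ nonempty closed and disjoint in $\beta X$, is enclosed in disjoint open sets of $\beta X$ by normality; their $X$-traces produce disjoint open $U, V \subseteq X$ with $a \in \Cl{\alpha X}U \cap \Cl{\alpha X}V$, and pushing the enclosures down through $\pi$ yields a neighborhood $W$ of $a$ with $W \cap X = U \sqcup V$, which is a separation in the sense of (2). For $(1) \Rightarrow (2)$, also contrapositively, a separation $W, U, V$ at $a$ induces the closed subsets $A' = \pi^{-1}(a) \cap \Cl{\beta X}U$ and $B' = \pi^{-1}(a) \cap \Cl{\beta X}V$ of $\pi^{-1}(a)$ whose union is the whole fiber (since $\pi^{-1}(W) \subseteq \Cl{\beta X}U \cup \Cl{\beta X}V$ by density of $X$ in $\beta X$) and both of which are nonempty (since $a \in \pi(\Cl{\beta X}U) \cap \pi(\Cl{\beta X}V)$); one then needs to verify $A' \cap B' = \emptyset$ to disconnect the fiber and contradict (1). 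The main obstacle is precisely this emptiness: the separation relation $\Cl{X}U \cap V = \emptyset = \Cl{X}V \cap U$ does not by itself force $\Cl{\beta X}U$ and $\Cl{\beta X}V$ to be disjoint in $\beta X$, so one must exploit the confinement of the separation inside the open neighborhood $W$ — for instance, by extending the characteristic function of $U$ on the dense subspace $W \cap X$ of the Tychonoff open set $\pi^{-1}(W)$ via the Stone-\v{C}ech extension property, thereby producing disjoint relatively clopen neighborhoods in $\pi^{-1}(W)$ whose traces on $\pi^{-1}(a)$ give the desired disconnection.
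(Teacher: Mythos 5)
The paper does not actually prove this proposition; it is quoted as a known fact with references to [KN] and [Sk], so there is no in-paper argument to compare yours against. On its own terms, your architecture is sound: the identities $\Cl{\alpha X}\Ext{\alpha X}U=\Cl{\alpha X}U$ and $\alpha X\pois\Ext{\alpha X}U=\Cl{\alpha X}(X\pois U)$ give $\Fr{\alpha X}\Ext{\alpha X}U=\Cl{\alpha X}U\cap\Cl{\alpha X}(X\pois U)\supseteq\Cl{\alpha X}\Fr{X}U$, and both directions of your $(2)\Leftrightarrow(3)$ argument check out, as does $\neg(1)\Rightarrow\neg(2)$ (separating a disconnected fiber by disjoint open sets in the normal space $\beta X$ and pushing down with the closed map $\pi$).

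The genuine gap is the one you flag yourself in $(1)\Rightarrow(2)$: the disjointness $A'\cap B'=\pi^{-1}(a)\cap\Cl{\beta X}U\cap\Cl{\beta X}V=\emptyset$. Your proposed repair does not work as stated: the Stone--\v{C}ech extension property extends bounded continuous functions defined on all of $X$, not functions defined on the dense subspace $W\cap X$ of the open set $\pi^{-1}(W)$ --- an open subset of $X$ need not be $C^{\ast}$-embedded, so $\Cl{\beta X}(W\cap X)$ is in general not $\beta(W\cap X)$ and the characteristic function of $U$ on $W\cap X$ need not extend over $\pi^{-1}(W)$. The correct repair is to first prove that $\beta X$ itself satisfies $(3)$, i.e.\ $\Cl{\beta X}U\cap\Cl{\beta X}(X\pois U)=\Cl{\beta X}\Fr{X}U$ for every open $U\subseteq X$: given $q$ in the left-hand side but not in $\Cl{\beta X}\Fr{X}U$, choose $f:\beta X\to[0,1]$ with $f(q)=0$ and $f\equiv1$ on $\Cl{\beta X}\Fr{X}U$, and define $h:X\to[0,2]$ by $h=f$ on $X\pois U$ and $h=2-f$ on $U$; this $h$ is continuous (the two formulas agree, both equal to $1$, on $\Fr{X}U$), is globally defined and bounded, hence extends to $\bar{h}:\beta X\to[0,2]$, and evaluating $\bar{h}$ at $q$ along $U\cap\{f<\varepsilon\}$ and along $(X\pois U)\cap\{f<\varepsilon\}$ forces $\bar{h}(q)=2$ and $\bar{h}(q)=0$ simultaneously. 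With this lemma, $\Cl{\beta X}U\cap\Cl{\beta X}V\subseteq\Cl{\beta X}U\cap\Cl{\beta X}(X\pois U)=\Cl{\beta X}\Fr{X}U$, and since your separation gives $\Fr{X}U\cap\Int{\alpha X}W=\emptyset$, the open set $\pi^{-1}(\Int{\alpha X}W)\supseteq\pi^{-1}(a)$ misses $\Cl{\beta X}\Fr{X}U$, which yields $A'\cap B'=\emptyset$ and completes the disconnection of the fiber.
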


A (not necessarily continuous) function $s:\Bbb R_+\to\Bbb R_+$
 between the set of positive real numbers
is called {\it asymptotically sublinear} (resp.\ {\it asymptotically subpower})
if for every $\alpha>0$ there exists $t_0>0$ such that $s(t)<\alpha t$  (resp.\ $s(t)<t^{\alpha}$) for all $t>t_0$.
The set of all asymptotically sublinear functions (resp.\ all asymptotically subpower functions, \ 
all positive constant functions)
 is denoted by $\mathcal{L}$ (resp.\ $\mathcal{P}$, \ $\mathcal{H}$).
Observe that $\mathcal{H}\subsetneqq\mathcal{P}\subsetneqq\mathcal{L}$.
Indeed, $s(t)=\ln(1+t)$ is an asymptotically subpower function, and 
$s(t)=\sqrt{t}$ is an asymptotically sublinear function which is not an asymptotically subpower function.
Furthermore, let $\mathcal{A}$ be either $\mathcal{P}$ or $\mathcal{L}$. 
We can easily verify that  $\mathcal{A}$ is closed under addition and positive scalar multiple.

Let $(X,d)$ and $(Y,\rho)$ be proper metric spaces.
A map $f:X\to Y$ is called {\it Higson sublinear} (resp.\ {\it Higson subpower, Higson})
 provided that
\[\lim_{|x|\to\infty}\Diam{\rho}f(B_d(x,s(|x|)))=0\quad\quad(\ast)_s\]
for each $s\in\mathcal{L}$ (resp.\ $s\in\mathcal{P}$, \ $s\in\mathcal{H}$),
 i.e., for every $s\in\mathcal{L}$ (resp.\ $s\in\mathcal{P}$, $s\in\mathcal{H}$) and $\varepsilon>0$ there exists $r_0>0$ such that $\Diam{\rho}f(B_d(x,s(|x|)))<\varepsilon$
 for all $x\in X$ with $|x|>r_0$.

Let $C^{\ast}(X)$ be the set of all bounded real valued continuous functions on $X$.
For a noncompact proper metric space $(X,d)$ and
 for each $(K,\mathcal{K})\in\{(L,\mathcal{L}),(P,\mathcal{P}),(H,\mathcal{H})\}$,
 we consider the following subsets of $C^{\ast}(X)$:
$$C_K(X)=\{f\in C^{\ast}(X)~|~f~\mbox{satisfies}\ (\ast)_s\ \mbox{for every}\ s\in\mathcal{K}\}.$$
Here, each $C_K(X)$ is a closed subring of $C^{\ast }(X)$
containing the constants and generate the topology of $X$. 
The {\it sublinear Higson compactification} 
 (resp.\ {\it subpower Higson compactification}, {\it Higson compactification}) 
 of a noncompact proper metric space $(X,d)$
 is the unique compactification associated with the closed subring
 $C_L(X)$ (resp.\ $C_P(X)$,  $C_H(X)$),
 denoted by $h_L X$ (resp.\ $h_P X$,  $h_H X$).
Note that these compactifications are metric-dependent and
$h_L X\leq h_P X\leq h_H X$ since $C_L(X)\subseteq C_P(X)\subseteq C_H(X)$.
The remainder $h_L X\setminus X$  (resp.\ $h_P X\setminus X$, $h_H X\setminus X$) 
of the compactification $h_L X$ (resp.\ $h_P X$,  $h_H X$) is called the
{\it sublinear Higson corona}  (resp.\  {\it subpower Higson corona},  {\it Higson corona}) and is denoted by  $\nu_L X$ 
(resp.\ $\nu_P X$,  $\nu_H X$).
Here, it is well-known that these compactifications are characterized as follows:

\begin{itemize}
\item[$(\natural)$] A bounded map $f:X\to\Bbb R$ has an extension $\bar{f}:h_K X\to\Bbb R$
 if and only if $f\in C_K(X)$,
\end{itemize}
where $K\in\{L,P,H\}$.

We call a finite system $\{E_1,~E_2,~\ldots,~E_n\}$ of closed subsets in $(X,d)$ {\it diverges as a linear function}
 (see \cite{DS}) if there exist $\alpha>0$ and $r_0>0$ such that 
 $\mathrm{max}\{d(x,E_i)~|~1\leq i\leq n\}\geq\alpha|x|$ whenever $|x|>r_0$.

\begin{prop}[{\cite[Lemma 2.3]{DS}}]\label{prop:l_div} 
Let $($$X,d$$)$ be a noncompact proper metric space.
For a system $\{E_1,~E_2,~\ldots,~E_n\}$ of closed subsets of $X$, the following conditions are equivalent$:$
\begin{itemize}
\item[$($1$)$] $\nu_LX\cap(\cap_{i=1}^n\Cl{h_L X}E_i)=\emptyset$,
\item[$($2$)$] $\{E_1,~E_2,~\ldots,~E_n\}$ diverges as a linear function.
\end{itemize}
\end{prop}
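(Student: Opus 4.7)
The plan is to prove each implication directly, relying on the characterization $(\natural)$ that $C_L(X)$ determines $h_LX$.

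For $(2)\Rightarrow(1)$, assume that there exist $\alpha>0$ and $r_0>0$ such that $\max_i d(x,E_i)\ge\alpha|x|$ whenever $|x|>r_0$. The idea is to build an explicit map whose coordinates lie in $C_L(X)$ and collectively separate $E_1,\dots,E_n$. For $|x|>r_0$ set
\[
\phi_i(x)=\frac{d(x,E_i)}{\sum_{j=1}^{n}d(x,E_j)},
\]
and extend each $\phi_i$ continuously and boundedly over the compact region $\{|x|\le r_0\}$. On the tail the denominator is $\ge\alpha|x|$, so $\phi_i:X\to[0,1]$ is well-defined, with $\sum_i\phi_i(x)=1$ and $\phi_i(x)=0$ for $x\in E_i$ with $|x|>r_0$. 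A direct triangle-inequality estimate yields
\[
|\phi_i(x)-\phi_i(y)|\le\frac{(n+1)\,d(x,y)}{\alpha|x|}
\]
for $|x|,|y|$ large, so $\diam\phi_i(B_d(x,s(|x|)))\to 0$ for every $s\in\mathcal{L}$; hence $\phi_i\in C_L(X)$. By $(\natural)$ each $\phi_i$ extends to $\bar\phi_i:h_LX\to[0,1]$. Any point $p\in\nu_LX$ is the limit of a net in $\{|x|>r_0\}$, so $\sum_i\bar\phi_i(p)=1$; if additionally $p\in\bigcap_i\Cl{h_LX}E_i$, then one also finds nets in each $E_i\cap\{|x|>r_0\}$ converging to $p$, forcing $\bar\phi_i(p)=0$ for every $i$, a contradiction.

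For $(1)\Rightarrow(2)$ I argue the contrapositive. If the system does not diverge linearly, then for each $k\in\Bbb N$ one picks $x_k\in X$ with $|x_k|>k$ and $\max_id(x_k,E_i)<|x_k|/k$, and for each $i$ a point $y_i^k\in E_i$ with $d(x_k,y_i^k)<|x_k|/k$. After thinning to a subsequence with $|x_{k+1}|>2|x_k|$, define $s:\Bbb R_+\to\Bbb R_+$ by $s(t)=|x_k|/k$ for $t\in[|x_k|,|x_{k+1}|)$ (and $s(t)=|x_1|$ for $t<|x_1|$); then $s(t)/t\le 1/k$ throughout the $k$-th block, so $s\in\mathcal{L}$. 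By construction $y_i^k\in B_d(x_k,s(|x_k|))$ for all $k$ and $i$, so $|f(x_k)-f(y_i^k)|\to 0$ for every $f\in C_L(X)$. Compactness of $h_LX$ furnishes a subnet along which $x_k\to p\in\nu_LX$ and each $y_i^k\to p_i\in\Cl{h_LX}E_i$; since $C_L(X)$ separates points of $h_LX$, the estimate above forces $p_i=p$ for every $i$, placing $p\in\nu_LX\cap\bigcap_i\Cl{h_LX}E_i$.

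The main obstacle will be the construction of the sublinear $s$ in the second implication: it must be genuinely sublinear on all of $\Bbb R_+$ rather than only along the chosen sequence, while still controlling the radii $|x_k|/k$ at the right scales. Thinning the sequence geometrically and letting $s$ be a step function that is visibly $o(t)$ on each block is the cleanest remedy. The first implication is largely mechanical once the normalized distance map $\phi_i=d(\cdot,E_i)/\sum_j d(\cdot,E_j)$ is identified, since it simultaneously vanishes on $E_i$ and yields a partition of unity whose extended total must equal $1$ at any corona point.
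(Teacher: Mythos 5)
Your argument is correct. The paper does not prove this statement at all---it is imported verbatim from \cite[Lemma 2.3]{DS}---so there is no in-paper proof to compare against; your two implications (the normalized distance functions $\phi_i=d(\cdot,E_i)/\sum_{j}d(\cdot,E_j)$, which lie in $C_L(X)$, vanish on the tails of the $E_i$, and sum to $1$ at every corona point, for one direction, and the block-constant sublinear function $s$ manufactured from a sequence witnessing the failure of linear divergence, together with the fact that $C_L(X)$ separates the points of $h_LX$, for the other) constitute the standard proof of this fact, essentially that of \cite{DS}.
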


In the case of $n=2$, we can formulate the fact above as follows.

\begin{lem}[{\cite[Lemma 2.4]{DS}}]\label{lem:r_subl_div} 
Let $A$ and $B$ be subsets of a metric space $X$.
Then the following are equivalent$:$
\begin{itemize}
\item[$($1$)$] There exist $C$, $r_0>0$ such that $\max\{d(x,A),d(x,B)\}\geq C|x|$ whenever $|x|\geq r_0$,
\item[$($2$)$] there exist $D$, $r_1>0$ such that 
$d(A\setminus B_d(x_0,r),B\setminus B_d(x_0,r))\geq Dr$ whenever $r\geq r_1$.
\end{itemize}
\end{lem}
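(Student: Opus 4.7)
The plan is to establish the two implications by direct computation with the triangle inequality, choosing the constants explicitly between the two conditions. The essential observation is that (1) is a statement about arbitrary points $x$ being far from at least one of the sets, while (2) is a statement about pairs $(a,b) \in A \times B$ being far from each other; a point $a \in A$ witnesses both, because it lies in $A$ (so $d(a,A)=0$) and is close to some $b \in B$ iff $d(a,B)$ is small.

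For (1) $\Rightarrow$ (2), the key observation is that any $a \in A$ satisfies $d(a,A)=0$, so condition (1) forces $d(a,B) \geq C|a|$ once $|a| \geq r_0$. I would therefore take $D=C$ and $r_1=r_0$: for $r \geq r_1$ and any $a \in A \setminus B_d(x_0,r)$, we have $|a| \geq r \geq r_0$, hence $d(a,B) \geq C|a| \geq Cr$. This immediately gives $d(a,b) \geq Cr$ for every $b \in B \setminus B_d(x_0,r)$.

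For (2) $\Rightarrow$ (1), I would argue by contradiction. Pick any $C \in (0, D/(D+2))$, equivalent to requiring $2C < D(1-C)$, and set $r_0 = r_1/(1-C)$. If condition (1) fails at some $x$ with $|x| \geq r_0$, then $d(x,A) < C|x|$ and $d(x,B) < C|x|$, so we can choose $a \in A$ and $b \in B$ with $d(x,a), d(x,b) < C|x|$. The triangle inequality yields $|a|, |b| > (1-C)|x|$, so both $a$ and $b$ lie outside $B_d(x_0,r)$ for $r := (1-C)|x| \geq r_1$. Condition (2) then gives $d(a,b) \geq Dr = D(1-C)|x|$, while the triangle inequality also forces $d(a,b) < 2C|x|$; this contradicts the choice of $C$.

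Neither direction presents a substantive obstacle. The only moderately delicate point is selecting $C$ in the second implication to satisfy $2C < D(1-C)$ and then making sure $r_0$ is chosen large enough that $(1-C)|x| \geq r_1$ is automatic for $|x| \geq r_0$. Once this bookkeeping is in place, the argument is a matter of chaining two applications of the triangle inequality.
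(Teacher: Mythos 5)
Your proof is correct, and both implications are carried out with the right constants: $D=C$, $r_1=r_0$ works for $(1)\Rightarrow(2)$ since a point $a\in A$ with $|a|\geq r$ has $d(a,A)=0$ and hence $d(a,B)\geq C|a|\geq Cr$; and your choice $C<D/(D+2)$, $r_0=r_1/(1-C)$ makes the contradiction $2C|x|>d(a,b)\geq D(1-C)|x|$ go through for $(2)\Rightarrow(1)$. The paper itself gives no proof of this lemma --- it is quoted verbatim from \cite[Lemma 2.4]{DS} --- so there is nothing to compare against, but your argument is the standard one and is complete.
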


\begin{df}
{\rm
Let $(X,d)$ be a noncompact proper metric space and let $\mathcal{V}$ be a family of open subsets of $X$.
The {\it Lebesugue function} associated with the cover $\mathcal{V}$ is defined by
$$L^{\mathcal{V}}(x)=\sup_{V\in\mathcal{V}}d(x,X\pois V).$$
}
\end{df}

\begin{df}
{\rm
For a proper metric space $(X,d)$, we say a function $f:X\to[0,\infty)$ is ({\it eventually}) 
 {\it at least linear} if there exist $c$, $r_0>0$ such that $f(x)\geq c|x|$ whenever $|x|\geq r_0$.
}
\end{df}

\begin{prop}[{\cite[Corollary 2.5]{DS}}]\label{prop:cover}
Let $(X,d)$ be a noncompact proper metric space and
 let $\mathcal{V}=\{V_1,V_2,\ldots,V_n\}$ be a finite family of open subsets of $X$.
Then $\tilde{\mathcal{V}}=\{\tilde{V}_1,\tilde{V}_2,\ldots,\tilde{V}_n\}$ covers the corona $\nu_LX$
 if and only if the Lebesgue function $L^{\mathcal{V}}$ is at least linear.
\end{prop}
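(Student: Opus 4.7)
The plan is to reduce Proposition~\ref{prop:cover} to Proposition~\ref{prop:l_div} by passing to complements. For each $i$ set $E_i = X\pois V_i$, so each $E_i$ is closed in $X$. By the very definition of $\Ext{h_L X}$ we have
\[
\tilde{V}_i \;=\; h_L X \pois \Cl{h_L X}(X\pois V_i) \;=\; h_L X \pois \Cl{h_L X} E_i.
\]

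Next I would translate the covering condition into an intersection condition. Since the $\tilde{V}_i$ are open in $h_LX$ with $\tilde V_i\cap X=V_i$, the family $\tilde{\mathcal{V}}$ covers $\nu_L X$ if and only if $\nu_L X \subseteq \bigcup_{i=1}^n (h_LX\pois\Cl{h_LX}E_i)$, which by De Morgan is equivalent to
\[
\nu_L X \;\cap\; \bigcap_{i=1}^n \Cl{h_L X} E_i \;=\; \emptyset.
\]
By Proposition~\ref{prop:l_div} applied to $\{E_1,\ldots,E_n\}$, this last condition is equivalent to the system $\{E_1,\ldots,E_n\}$ diverging as a linear function, i.e.\ the existence of $\alpha,r_0>0$ such that $\max_{1\le i\le n} d(x,E_i)\ge\alpha|x|$ whenever $|x|>r_0$.

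Finally, I would observe that because $\mathcal{V}$ is finite, the Lebesgue function reduces to
\[
L^{\mathcal{V}}(x) \;=\; \sup_{V\in\mathcal{V}} d(x,X\pois V) \;=\; \max_{1\le i\le n} d(x,E_i),
\]
so $L^{\mathcal{V}}$ being at least linear is literally the same statement as the divergence condition above. Chaining the three equivalences completes the proof. The argument is essentially a bookkeeping translation once Proposition~\ref{prop:l_div} is in hand; the only point requiring mild care is the first step, verifying that for open $V_i$ the set $\tilde{V}_i$ agrees with the complement of $\Cl{h_L X}(X\pois V_i)$, but this is immediate from the definition of $\Ext{\alpha X}V$ as the largest open set in $\alpha X$ restricting to $V$. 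I do not anticipate a substantive obstacle, since the proposition is really just a convenient repackaging of the divergence criterion in covering terms.
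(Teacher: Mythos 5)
Your proposal is correct, and it is exactly the intended derivation: the paper itself states this result without proof (citing \cite[Corollary 2.5]{DS}), and in that source it is obtained precisely by your reduction, i.e.\ writing $\tilde{V}_i=h_LX\pois\Cl{h_LX}(X\pois V_i)$, converting the covering condition by De Morgan into $\nu_LX\cap\bigcap_{i=1}^n\Cl{h_LX}(X\pois V_i)=\emptyset$, and invoking Proposition~\ref{prop:l_div} together with the observation that for a finite family $L^{\mathcal{V}}(x)=\max_i d(x,X\pois V_i)$. The only (immaterial) discrepancy is the strict versus non-strict inequality $|x|>r_0$ versus $|x|\geq r_0$ in the two definitions, which is absorbed by adjusting $r_0$.
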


\begin{lem}\label{lem:cap_cld}
Let $(X,d)$ be a proper metric space, $0<\alpha<1/5$, $x$, $y\in X$ 
 and $N_d(x,\alpha|x|)\cap N_d(y,\alpha|y|)\ne\emptyset$.
Then $N_d(y,\alpha|y|)\subset B_d(x,4\alpha|x|)$.
\end{lem}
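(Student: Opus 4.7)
The plan is to chase triangle inequalities, using the intersection point to control $|y|$ in terms of $|x|$ and then bounding $d(x,w)$ for an arbitrary $w\in N_d(y,\alpha|y|)$.

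First I would pick any $z\in N_d(x,\alpha|x|)\cap N_d(y,\alpha|y|)$, so that $d(x,z)\leq \alpha|x|$ and $d(y,z)\leq \alpha|y|$. Combining these via the triangle inequality gives $d(x,y)\leq \alpha|x|+\alpha|y|$. Next I would convert this into a comparison between $|x|$ and $|y|$ by applying the triangle inequality to the base point: $|y|\leq |x|+d(x,y)\leq (1+\alpha)|x|+\alpha|y|$, which rearranges to $|y|\leq \frac{1+\alpha}{1-\alpha}|x|$. (Here I am using $\alpha<1/5<1$ so that $1-\alpha>0$.)

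Now take any $w\in N_d(y,\alpha|y|)$. By the triangle inequality twice,
$$
d(x,w)\leq d(x,y)+d(y,w)\leq \alpha|x|+\alpha|y|+\alpha|y|=\alpha|x|+2\alpha|y|.
$$
Substituting the bound on $|y|$ yields $d(x,w)\leq \alpha|x|\bigl(1+\frac{2(1+\alpha)}{1-\alpha}\bigr)=\alpha|x|\cdot\frac{3+\alpha}{1-\alpha}$. The hypothesis $\alpha<1/5$ makes $\frac{3+\alpha}{1-\alpha}<4$ (at $\alpha=1/5$ the quotient is exactly $4$, and it is increasing in $\alpha$), so $d(x,w)<4\alpha|x|$, giving $w\in B_d(x,4\alpha|x|)$ as required.

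The argument is essentially a direct computation; the only subtle point is that the conclusion is an \emph{open} ball, which forces the strict inequality $\alpha<1/5$ rather than $\alpha\leq 1/5$. I do not anticipate any genuine obstacle — just care in arranging the triangle inequalities so that the factor $\frac{3+\alpha}{1-\alpha}$ appears and is then estimated against $4$ under the given constraint on $\alpha$.
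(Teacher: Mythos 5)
Your proof is correct and follows essentially the same route as the paper's: use the intersection point to get $d(x,y)\leq\alpha|x|+\alpha|y|$, deduce $|y|\leq\frac{1+\alpha}{1-\alpha}|x|$, and then bound $d(x,w)\leq\alpha|x|+2\alpha|y|$; your factor $\alpha\cdot\frac{3+\alpha}{1-\alpha}$ is identical to the paper's $\frac{3\alpha+\alpha^2}{1-\alpha}$, and the comparison with $4\alpha$ under $\alpha<1/5$ is the same elementary estimate.
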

\begin{proof}
If $0<\alpha<1/5$ then $(3\alpha+\alpha^2)/(1-\alpha)<4\alpha$ since
 $4\alpha-(3\alpha+\alpha^2)/(1-\alpha)=\alpha(1-5\alpha)/(1-\alpha)>0$.
By assumption, 
 we have $d(x,y)\leq\alpha|x|+\alpha|y|$.
If $|y|>|x|+\alpha|x|+\alpha|y|$, then
$$|y|>|x|+d(x,y)=d(x,x_0)+d(x,y)\geq d(y,x_0)=|y|,$$
 a contradiction. 
Hence $|y|\leq|x|+\alpha|x|+\alpha|y|$, i.e., $|y|\leq(1+\alpha)|x|/(1-\alpha)$.
For any $w\in N_d(y,\alpha|y|)$, 
\begin{align*}
d(w,x) & \leq d(w,y)+d(y,x)\leq\alpha|y|+\alpha|x|+\alpha|y|=\alpha|x|+2\alpha|y| \\
 & \leq\alpha|x|+2\alpha\cdot\frac{1+\alpha}{1-\alpha}|x|=\frac{3\alpha+\alpha^2}{1-\alpha}|x|<4\alpha|x|,
\end{align*}
hence $N_d(y,\alpha|y|)\subset B_d(x,4\alpha|x|)$.
\end{proof}

Let $\Bbb N=\{1,2,3,\ldots\}$ be the set of natural numbers,
 and let $\mathcal{U}$ be a collection of subsets of $X$ and $Y\subset X$.
We define $\St{Y}{U}=\bigcup\{U\in\mathcal{U}~|~U\cap Y\ne\emptyset\}$.

\begin{lem}\label{lem:div}
Let $(X,d)$ be a noncompact proper metric space, $A$ and $B$ be closed sets in $X$.
If there exist $\alpha$ with $0<\alpha<1/5$ and $r_0>0$ such that 
 $\max\{d(x,A),d(x,B)\}\geq4\alpha|x|$ whenever $|x|\geq r_0$,
 then $$\max\{d(x,\St{A}{U}),d(x,\St{B}{U})\}\geq\alpha|x|$$ whenever $|x|\geq r_0$,
 where $\mathcal{U}=\{N_d(x,\alpha|x|)~|~x\in X\}$.
\end{lem}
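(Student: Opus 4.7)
The plan is a proof by contradiction that directly exploits Lemma \ref{lem:cap_cld}. Fix $x\in X$ with $|x|\ge r_0$ and assume, toward a contradiction, that
$$\max\{d(x,\St{A}{U}),d(x,\St{B}{U})\}<\alpha|x|.$$
The goal is to deduce that also $\max\{d(x,A),d(x,B)\}<4\alpha|x|$, contradicting the hypothesis on $A$ and $B$.

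The heart of the argument is a one-set statement I would first isolate: for any nonempty $S\subset X$, if $d(x,\St{S}{U})<\alpha|x|$ then $d(x,S)<4\alpha|x|$. To prove this, pick a witness $p\in\St{S}{U}$ with $d(x,p)<\alpha|x|$ (available by the definition of the infimum). By the definition of $\St{S}{U}$, the point $p$ lies in some element $N_d(y,\alpha|y|)\in\mathcal{U}$ which meets $S$, say at a point $s$. Since $d(x,p)<\alpha|x|$, I have $p\in N_d(x,\alpha|x|)$, and so
$$p\in N_d(x,\alpha|x|)\cap N_d(y,\alpha|y|).$$
This intersection being nonempty and $0<\alpha<1/5$, Lemma \ref{lem:cap_cld} applies and yields $N_d(y,\alpha|y|)\subset B_d(x,4\alpha|x|)$. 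In particular $s\in B_d(x,4\alpha|x|)$, whence $d(x,S)\le d(x,s)<4\alpha|x|$.

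Applying this one-set statement once to $S=A$ and once to $S=B$ gives $d(x,A)<4\alpha|x|$ and $d(x,B)<4\alpha|x|$, contradicting the hypothesis $\max\{d(x,A),d(x,B)\}\ge 4\alpha|x|$. There is no real obstacle here: the numerical factor $4$ on the right-hand side of the hypothesis is precisely the inflation produced when passing from an $\alpha$-scale ball centered at a nearby point $y$ to one centered at $x$ via Lemma \ref{lem:cap_cld}, and the restriction $\alpha<1/5$ is exactly what that lemma requires. The only minor care needed is extracting honest witnesses realizing the strict inequalities, which is immediate from the definition of distance to a set.
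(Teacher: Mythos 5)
Your proposal is correct and follows essentially the same route as the paper: the paper also argues by contradiction, takes a point $u_i\in\St{A}{U}$ within $\alpha|x_i|$ of $x_i$ lying in some $N_d(p_i,\alpha|p_i|)$ that meets $A$, and invokes Lemma \ref{lem:cap_cld} on the nonempty intersection $N_d(x_i,\alpha|x_i|)\cap N_d(p_i,\alpha|p_i|)$ to get $N_d(p_i,\alpha|p_i|)\subset B_d(x_i,4\alpha|x_i|)$, hence $d(x_i,A)<4\alpha|x_i|$, and likewise for $B$. The only cosmetic difference is that the paper phrases the contradiction with sequences $|x_i|\to\infty$ while you argue pointwise at a fixed $x$, which is if anything the cleaner formulation of the same idea.
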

\begin{proof}
Suppose not.
Then there exist sequences $\{x_i\}\subset X$,
 $\{u_i\}_{i\in\Bbb N}\subset \St{A}{U}$,
 $\{v_i\}_{i\in\Bbb N}\subset \St{B}{U}$
 such that $\lim_{i\to\infty}|x_i|=\infty$,
 $d(x_i,u_i)<\alpha|x_i|$, $d(x_i,v_i)<\alpha|x_i|$.
Moreover, there exist sequences $\{p_i\}_{i\in\Bbb N}$, $\{q_i\}_{i\in\Bbb N}\subset X$ 
 such that $u_i\in N_d(p_i,\alpha|p_i|)$, $A\cap N_d(p_i,\alpha|p_i|)\ne\emptyset$ and
 $v_i\in N_d(q_i,\alpha|q_i|)$, $B\cap N_d(q_i,\alpha|q_i|)\ne\emptyset$.
Since $u_i\in N_d(x_i,\alpha|x_i|)\cap N_d(p_i,\alpha|p_i|)$ and
 $v_i\in N_d(x_i,\alpha|x_i|)\cap N_d(q_i,\alpha|q_i|)$,
 by Lemma \ref{lem:cap_cld},
 $N_d(p_i,\alpha|p_i|)\cup N_d(q_i,\alpha|q_i|)\subset B_d(x_i,4\alpha|x_i|)$,
 which implies $\max\{d(x_i,A),d(x_i,B)\}<4\alpha|x_i|$,
 a contradiction.
\end{proof}

\section{Non local connectedness and mutual aposyndesis of $\nu_L\Bbb R^n$}\label{mut-ap.chap}

A topological space $X$ is said to be {\it locally connected at}
 $p\in X$ provided that each neighborhood of $p$ contains a connected open neighborhood of $p$.
A topological space $X$ is said to be {\it connected im kleinen at}
 $p\in X$ provided that each neighborhood of $p$ contains a connected neighborhood of $p$.
It is known that $X$ is locally connected at every point if and only if
 $X$ is connected im kleinen at every point (cf.\ \cite[1.4.18]{Ma}).

\begin{prop}\label{prop:notLC} 
Let $($$X,d$$)$ be a noncompact proper metric space.
Then $\nu_LX$ is not locally connected at any point.
\end{prop}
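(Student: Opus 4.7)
The plan is to show non-local-connectedness of $\nu_L X$ at each $p\in\nu_L X$ by producing a basic open neighborhood $W$ of $p$ in which the connected component of $p$ is nowhere dense, which prevents any connected open subneighborhood of $p$ from being contained in $W$.

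Fix $p\in\nu_L X$ and an open $V_0\subset X$ with $p\in\tilde V_0$, and set $W:=\tilde V_0\cap\nu_L X$. The disconnecting data will be produced as follows. Fix $\alpha\in(0,1/5)$ and a rapidly growing sequence $R_0<R_1<\cdots$ with $R_{k+1}\geq 10R_k$. Define annular slabs $E_k:=\{x\in X:R_k\leq|x|\leq(1+2\alpha)R_k\}$ and let $A:=\bigcup_{k\text{ even}}E_k,\ B:=\bigcup_{k\text{ odd}}E_k$. By the fast growth of the radii, these closed sets satisfy $\max\{d(x,A),d(x,B)\}\geq 4\alpha|x|$ for $|x|$ sufficiently large. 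Applying Lemma~\ref{lem:div} with $\mathcal{U}:=\{N_d(x,\alpha|x|):x\in X\}$, the star-thickenings $A':=\St{A}{U}$ and $B':=\St{B}{U}$ are open subsets of $X$ with $\max\{d(x,A'),d(x,B')\}\geq\alpha|x|$ on the tail, so by Proposition~\ref{prop:l_div},
\[
\overline{A'}\cap\overline{B'}\cap\nu_L X=\emptyset.
\]
Since $V_0$ is unbounded and $p\in\tilde V_0\cap\nu_L X$, the point $p$ lies in exactly one of $\overline{A'}\cap\nu_L X$ and $\overline{B'}\cap\nu_L X$, say the former. Then $C:=\overline{A'}\cap W$ is a closed subset of $W$ containing $p$, while the open set $W\cap\tilde{B'}$ is nonempty and disjoint from $\tilde{A'}\cap\nu_L X$; in particular the component of $p$ in $W$ is contained in $C$.

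To conclude that this component is nowhere dense in $W$, I would iterate the annular construction inside $\overline{A'}\cap W$ at finer radial scales, producing a nested decreasing sequence of closed subsets of $W$ each containing $p$, each of which excludes a fresh ``opposite'' piece accumulating on $p$; the component of $p$ in $W$ is contained in the intersection of this nested family, which by construction has empty interior. The principal obstacle is arranging this iteration so that $p$ at every level lies nontrivially in the new ``$A'$-piece'' while some newly produced opposite piece still accumulates on $p$ inside $W$. This requires coordinating the radial scales across levels of the recursion so that the hypotheses of Lemma~\ref{lem:div} remain satisfied globally (preserving the factor-of-four room needed for linear divergence after thickening), combined with Proposition~\ref{prop:cover} to ensure that at every level the cover property of the two annular pieces witnesses a linear Lebesgue function; this is the delicate geometric heart of the argument.
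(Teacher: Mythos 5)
Your overall setup (alternating radial blocks, linear divergence via Proposition~\ref{prop:l_div} and Lemma~\ref{lem:r_subl_div}, star-thickening via Lemma~\ref{lem:div}) is in the same spirit as the paper's argument, but the proof breaks down at its decisive steps. First, the slabs $E_k$ are separated by gaps of relative width about $10$, while the star-thickening only fattens them by $\alpha|x|<|x|/5$; hence $A'\cup B'$ is far from all of $X$, $\Cl{h_LX}A'\cup\Cl{h_LX}B'$ does not cover $\nu_LX$, and the claim that $p$ ``lies in exactly one of'' the two traces is unjustified --- $p$ may lie in neither. Second, even granting $p\in\Cl{h_LX}A'$, the set $C=\Cl{h_LX}A'\cap W$ is closed but not open in $W$, and exhibiting a disjoint nonempty open set $W\cap\tilde{B'}$ does not make $C$ clopen; so ``the component of $p$ in $W$ is contained in $C$'' does not follow. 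A disconnection requires a partition of the neighborhood into two relatively clopen pieces, which your two sets cannot provide precisely because they do not cover it.

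The part you defer is not a technical iteration but the actual content of the theorem, and as formulated it cannot be completed: you need the ``opposite'' piece to accumulate on $p$, yet your two pieces have disjoint closures in $\nu_LX$, so $p$ has a neighborhood missing whichever piece it does not belong to. The paper escapes this by making the two-coloring depend on the neighborhood: given a closed neighborhood $V'$ of the point, it reindexes (via the order-preserving bijection $\phi$) only those radial blocks that $V'$ actually meets and assigns the values $0$ and $1$ alternately to \emph{those}, so that both values occur arbitrarily close to the point \emph{inside} $V'$. The separation itself is then obtained not from disjoint subsets of $X$ but from a single continuous $g:X\to[0,1]$ whose linear interpolations are confined to a fourth family of blocks ($X_0$) that $V'$ avoids, and whose slopes are calibrated against the geometric sequence $a_n$ so that $g$ is Higson sublinear and extends over $h_LX$ by $(\natural)$; the extension is then $\{0,1\}$-valued on $V'\cap\nu_LX$ and attains both values, giving the disconnection. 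Without an analogue of this neighborhood-dependent construction and the extension of a genuinely two-valued continuous function, your argument does not close; I would also drop the ``nowhere dense component'' formulation, which adds an uncompleted recursion on top of the unresolved core difficulty.
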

\begin{proof}
Let $\rho$ be the usual metric on the real number and fix a real number $c$ with $0<c<1/2$.
Define a sequence $\{a_n\}_{n\in\Bbb N}$ by
$a_{n+1}=1/(1-2c)^n$ for each $n\in\{0\}\cup\Bbb N$.
Since $0<1-2c<1$, we have $\lim_{n\to\infty}a_n=\infty$.
Note that
$$a_{n+1}-a_n=\frac{a_n}{1-2c}-a_n=\frac{2ca_n}{1-2c}\quad \mbox{and}\quad
ca_{n+1}=c\cdot\frac{a_n}{1-2c}=\frac{1}{2}\cdot\frac{2ca_n}{1-2c}=\frac{a_{n+1}-a_n}{2}.$$
Let $s$ be an asymptotically sublinear function.
Then there exists $N\geq2$ such that $s(t)<2ct$ for any $n\geq N$ and any $t\in [a_n,a_{n+1}]$.
Let $n\geq N$ and $t\in[a_n,a_{n+1}]$. 
Since
$$a_{n-1}=(1-2c)a_n\leq (1-2c)t=t-2ct$$ and
$$t+2ct\leq a_{n+1}+2ca_{n+1}=(1+2c)a_{n+1}=(1+2c)(1-2c)a_{n+2}<a_{n+2},$$
 we have 
\begin{itemize}
\item[$(\dagger)$]
 $B_{\rho}(t,s(t))\subset B_{\rho}(t,2ct)\subset[a_{n-1},a_{n+2}]$.
\end{itemize}
Since $t\in[a_n,a_{n+1}]$, we have $ca_n\leq ct\leq ca_{n+1}$.
If $t-a_n\leq(a_{n+1}-a_n)/2$, then
$$a_{n+1}-t\geq\frac{a_{n+1}-a_n}{2}=ca_{n+1}\geq ct.$$
If $t-a_n\geq(a_{n+1}-a_n)/2$, then $t-a_n\geq ca_{n+1}\geq ct$.
Hence we claim that
\begin{itemize}
\item[$(\ast)$] if $t\in[a_n,a_{n+1}]$, then $\max\{t-a_n,a_{n+1}-t\}\geq ct$.
\end{itemize}

Define two maps $f^+_n$, $f^-_n:[0,\infty)\to[0,1]$ as
$$f^+_n(t)=
\left\{
\begin{array}{cl}
0                     & \mbox{if}\ 0\leq t\leq a_n,\\
\displaystyle\frac{t-a_n}{a_{n+1}-a_n} & \mbox{if}\ a_n\leq t\leq a_{n+1},\\
1                     & \mbox{if}\ t\geq a_{n+1}
\end{array}
\right.$$
and
$$f^-_n(t)=
\left\{
\begin{array}{cl}
1                         & \mbox{if}\ 0\leq t\leq a_n,\\
\displaystyle\frac{a_{n+1}-t}{a_{n+1}-a_n} & \mbox{if}\ a_n\leq t\leq a_{n+1},\\
0                         & \mbox{if}\ t\geq a_{n+1}.
\end{array}
\right.$$
The slope of $f_n$ on $[a_n,a_{n+1}]$ is $1/(a_{n+1}-a_n)=(1-2c)^n/(2c)$
 and the slope of $f^-_n$ on $[a_n,a_{n+1}]$ is $-(1-2c)^n/(2c)$.
Let $s$ be an asymptotically sublinear function.
Then for any $\varepsilon>0$ there exists $r_0>0$ such that
 $s(t)<\varepsilon ct$ whenever $t\geq r_0$.
If $t\geq r_0$ and $t\in[a_n,a_{n+1}]$, then
\begin{align*}
 (\ddagger)\qquad \Diam{\rho}f^+_n(B_{\rho}(t,s(t)))&\leq\frac{(1-2c)^n}{2c}\cdot2s(t)<\frac{(1-2c)^n}{2c}\cdot2\varepsilon ct\\
 &\leq\varepsilon(1-2c)^nt\leq\varepsilon(1-2c)^n\cdot\frac{1}{(1-2c)^n}=\varepsilon
\end{align*}
since $t\leq a_{n+1}=1/(1-2c)^n$.
This inequality also holds for $f^-_n$.

Let $(X,d)$ be a noncompact proper metric space.
The following technique, dividing into four parts, is stated in \cite{Chi}.
Define $K_n=\{x\in X~|~a_n\leq|x|\leq a_{n+1}\}$ and
$$X_0=\bigcup_{i\in\Bbb N}K_{4i},\ X_1=\bigcup_{i\in\Bbb N}K_{4i+1},
\ X_2=\bigcup_{i\in\Bbb N}K_{4i+2}\ \mbox{and}\  X_3=\bigcup_{i\in\Bbb N}K_{4i+3}.$$
If $x\in K_n$, then $|x|\in[a_n,a_{n+1}]$.
Since $d(x,K_{n-1})+a_n\geq|x|$ and $|x|+d(x,K_{n+1})\geq a_{n+1}$,
 we have $\max\{d(x,K_{n-1}),d(x,K_{n+1})\geq\max\{|x|-a_n,a_{n+1}-|x|\}\geq c|x|$ by $(\ast)$.
This implies that $\{X_0,X_2\}$ and $\{X_1,X_3\}$ diverge as a linear function, respectively.

For any $z\in\nu_LX$, since $\nu_LX=\nu_L(X_0\cup X_1\cup X_2\cup X_3)=\bigcup_{i=0}^3(\Cl{h_LX}X_i\pois X_i)$,
 there exists $i=0,1,2,3$ such that $z\in\Cl{h_LX}X_i\pois X_i$.
Here, we assume $z\in\Cl{h_LX}X_2\pois X_2$,
 and other cases are valid in the following argument.

Since $\{X_0,X_2\}$ diverges as a linear function,
 by Proposition \ref{prop:l_div}, 
 we have $z\notin\Cl{h_LX}X_0\pois X_0$.
Hence $\Cl{h_LX}(X_1\cup X_2\cup X_3)\pois(X_1\cup X_2\cup X_3)$ is a closed neighborhood of $z$ in $\nu_LX$.
Set $W_{2i}=K_{4\cdot2i+1}\cup K_{4\cdot2i+2}\cup K_{4\cdot2i+3}$ and
 $W_{2i+1}=K_{4(2i+1)+1}\cup K_{4(2i+1)+2}\cup K_{4(2i+1)+3}$ for each $i\in\Bbb N$.
Let $V$ be an open neighborhood of $z$ in $\nu_LX$ such that
 $V\subset\Cl{h_LX}(X_1\cup X_2\cup X_3)\pois(X_1\cup X_2\cup X_3)$,
 and let $V'$ be a closed neighborhood of $z$ in $h_LX$ such that
 $V'\cap\nu_LX\subset V$.
Since $\nu_LX\cap V'\cap\Cl{h_LX}X_0=\emptyset$,
 by Proposition \ref{prop:l_div},
 $\{V'\cap X,X_0\}$ diverges as a linear function,
 so we may assume that $V'\cap X\subset X_1\cup X_2\cup X_3$.
Put $J=\{j\in\Bbb N~|~V'\cap W_j\ne\emptyset\}$.
Since $z\in V'\cap\nu_LX$, the cardinal of $J$ is infinite.
Let $\phi:\Bbb N\to J$ be an order preserving bijection.
Note that $V'\cap X\subset\bigcup_{j\in\Bbb N}W_{\phi(j)}$.
Define a map $g:X\to[0,1]$ as follows: for each $j\in\Bbb N$,
$$g(x)=
\left\{
\begin{array}{cl}
f^+_{4\phi(2j)}(|x|) & \mbox{if}\ x\in K_{4\phi(2j)},\\
1                        & \mbox{if}\ x\in W_{\phi(2j)},\\
f^-_{4(\phi(2j)+1)}(|x|) & \mbox{if}\ x\in K_{4(\phi(2j)+1)},\\
0                        & \mbox{otherwise.}
\end{array}
\right.$$
Note that $g(x)=0$ if $x\in W_{\phi(2j+1)}$.
By $(\dagger)$ and $(\ddagger)$, $g$ is a Higson sublinear map
 because for any $x\in X$ and $y\in B_d(x,s(|x|))$ we have $|y|\in B_{\rho}(|x|,s(|x|))$.
By $(\natural)$, there exists an extension $\bar{g}:h_LX\to[0,1]$ of $g$.
For each $j\in\Bbb N$, take $c_{2j}\in V'\cap W_{\phi(2j)}$ and $c_{2j+1}\in V'\cap W_{\phi(2j+1)}$
 and let $p$ and $q$ be cluster points of $\{c_{2j}\}_{j\in\Bbb N}$ and $\{c_{2j+1}\}_{j\in\Bbb N}$, respectively.
Since $g(\{c_{2j}\})=1$ and $g(\{c_{2j+1}\})=0$,
 we have 
  $\bar{g}(p)=1$ and $\bar{g}(q)=0$ by the continuity of $\bar{g}$.
Hence $\{0,1\}\subset\bar{g}(V'\cap\nu_LX)$.
Assume that there exists a point $z'\in V'\cap\nu_LX$ such that $\bar{g}(z')\in(0,1)$.
Then there exists a neighborhood $W$ of $z'$ in $h_LX$ such that $\bar{g}(W)\subset(0,1)$,
 so $\bar{g}(W\cap V'\cap X)=g(W\cap V'\cap X)\subset(0,1)$.
But by the definition of $g$, $\bar{g}(V'\cap X)=g(V'\cap X)\subset\{0,1\}$, a contradiction.
Hence $\bar{g}(V'\cap\nu_LX)=\{0,1\}$, that is, $V'\cap\nu_LX$ is not connected.
Therefore, $\nu_LX$ is not connected im kleinen at any point,
 so $\nu_LX$ is not locally connected at any point.
\end{proof}

\begin{rem}{\rm 
It is known that the Higson corona of any connected noncompact proper metric space with some conditions
 is not locally connected at any point $($cf.\ \cite[Theorem 5]{Kees}$)$.
Let $($$X,d$$)$ be a noncompact proper metric space.
Since $C_L(X)\subseteq C_P(X)\subseteq C_H(X)$ and using $(\natural)$,
 the map $g$ in the proof of Proposition $\ref{prop:notLC}$ has extensions
 $g_1:h_PX\to[0,1]$ and $g_2:h_HX\to[0,1]$, respectively.
Hence, by the same argument of the proof of Proposition $\ref{prop:notLC}$,
 we can conclude that $\nu_PX$ and $\nu_HX$ are also not locally connected at any point.
}
\end{rem}

A compact connected Hausdorff space is said to be {\it continuum}.
A {\it subcontinuum} is a continuum contained in a space.
Let $X$ be a continuum, $W$ be a subcontinuum of $X$ and $p\in W$.
We say $W$ a {\it continuum neighborhood} of $p$ if $p\in\Int{X}W$.
The continuum $X$ is {\it aposyndetic}
 provided that every two distinct points $x$ and $y$ of $X$
 there exists a continuum neighborhood $W$ of $x$ such that $y\notin W$;
 $X$ is {\it mutually aposyndetic}
 provided that every two distinct points of $X$ have disjoint continuum neighborhoods.
See \cite{Ma} for more details.

Now consider the $n$-dimensional Euclidean space $\Bbb R^n$ with the usual metric $d$.

\begin{thm}\label{thm:m-apo}
For every $n\in\Bbb N$ with $n\geq2$, $\nu_L\Bbb R^n$ is a mutually aposyndetic continuum.
\end{thm}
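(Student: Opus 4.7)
The plan is to, given distinct $p,q\in\nu_L\Bbb R^n$, exhibit disjoint subcontinua $W_p,W_q\subset\nu_L\Bbb R^n$ with $p\in\Int{\nu_L\Bbb R^n}W_p$ and $q\in\Int{\nu_L\Bbb R^n}W_q$. First, using normality of the compact Hausdorff space $h_L\Bbb R^n$, select open $U_p\ni p$ and $U_q\ni q$ with $\Cl{h_L\Bbb R^n}U_p\cap\Cl{h_L\Bbb R^n}U_q=\emptyset$, and put $A_p:=\Cl{h_L\Bbb R^n}U_p\cap\Bbb R^n$ and $A_q:=\Cl{h_L\Bbb R^n}U_q\cap\Bbb R^n$. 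Since $\Bbb R^n$ is dense in $h_L\Bbb R^n$ one checks $\Cl{h_L\Bbb R^n}A_p=\Cl{h_L\Bbb R^n}U_p$, hence $\Cl{h_L\Bbb R^n}A_p\cap\Cl{h_L\Bbb R^n}A_q=\emptyset$; Proposition \ref{prop:l_div} then gives that $\{A_p,A_q\}$ diverges as a linear function, and Lemma \ref{lem:r_subl_div} supplies $D,r_1>0$ for which $d(A_p\pois B_d(x_0,r),A_q\pois B_d(x_0,r))\geq Dr$ whenever $r\geq r_1$.

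Now fix a small $\alpha>0$ (say $\alpha<D/10$) and form the linearly growing tubular neighborhoods $V_p^0=\{x\in\Bbb R^n\mid|x|>r_1,\ d(x,A_p)<\alpha|x|\}$ and $V_q^0$ analogously. These are disjoint open sets, and an estimate in the spirit of Lemma \ref{lem:div} shows that the pair of $\Bbb R^n$-closures $\{\Cl{\Bbb R^n}V_p^0,\Cl{\Bbb R^n}V_q^0\}$ still diverges as a linear function with a constant close to $D$. The geometric crux is then to enlarge $V_p^0$ to an open set $V_p$ containing $A_p\cup(U_p\cap\Bbb R^n)$, connected, with each tail $V_p\pois B_d(x_0,r)$ connected for every sufficiently large $r$, and still at linear distance from an analogously enlarged $V_q$. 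Here the hypothesis $n\geq2$ is indispensable: in every dyadic shell $S_k=\{2^k\leq|x|<2^{k+1}\}$ the complement of $V_q^0$ is path-connected with plenty of room, so one may thread thin curvilinear bridges, of width at most $\beta|x|$ for some $\beta<\alpha/2$, between the components of $V_p^0\cap S_k$ and between successive shells by following a radial spine lying in $\Bbb R^n\pois V_q^0$; the resulting $V_p$ stays outside a $(\alpha-\beta)|x|$-neighborhood of $V_q$ at infinity, and its tails are connected by construction.

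Setting $W_p:=\Cl{h_L\Bbb R^n}V_p\cap\nu_L\Bbb R^n$ and $W_q$ analogously, one writes $W_p$ as the intersection of the decreasing family of continua $\Cl{h_L\Bbb R^n}(V_p\pois B_d(x_0,r))$ as $r\to\infty$, showing that $W_p$ is a (nonempty) subcontinuum of $\nu_L\Bbb R^n$; the disjointness $W_p\cap W_q=\emptyset$ follows from the linear divergence of $\{V_p,V_q\}$ via Proposition \ref{prop:l_div}. For the interior condition, the inclusion $U_p\cap\Bbb R^n\subset V_p$ combined with the maximality characterization of $\Ext{h_L\Bbb R^n}V_p$ forces $U_p\subset\Ext{h_L\Bbb R^n}V_p$, so $p\in U_p\cap\nu_L\Bbb R^n\subset\Ext{h_L\Bbb R^n}V_p\cap\nu_L\Bbb R^n\subset W_p$ with $U_p\cap\nu_L\Bbb R^n$ open in $\nu_L\Bbb R^n$, yielding $p\in\Int{\nu_L\Bbb R^n}W_p$, and symmetrically for $q$. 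I expect the bridging construction to be the main obstacle: one must simultaneously arrange that $V_p$ is connected at every scale, that it stays disjoint from $V_q$'s linear thickening, and that the quantitative divergence constant survives the enlargement --- none of which is possible for $n=1$, consistently with the indecomposable-continuum result of \cite{AT}.
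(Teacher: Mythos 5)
Your overall skeleton (separate $p$ and $q$ by closed neighborhoods in $h_L\Bbb R^n$, thicken their traces linearly, join the pieces by bridges, pass to the corona) matches the paper's, and the bookkeeping at both ends is sound: the continuum property via a nested intersection of closures of connected tails, disjointness via Proposition \ref{prop:l_div}, and the interior condition via $\Ext{h_L\Bbb R^n}$. But the step you yourself identify as the crux --- the bridging --- contains a genuine gap, and the specific claim it rests on is false. You assert that in each shell $S_k$ the complement of $V_q^0$ is path-connected, so that bridges for $V_p$ can follow ``a radial spine lying in $\Bbb R^n\pois V_q^0$.'' Nothing in your setup prevents $A_q=\Cl{h_L\Bbb R^n}U_q\cap\Bbb R^n$ from containing an entire sphere $\{|x|=r_k\}$ at infinitely many scales $r_k\to\infty$: linear divergence of $\{A_p,A_q\}$ only forces $A_p$ to avoid annuli of linear width around those spheres, which it can do while remaining unbounded. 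In that case $V_q^0$ contains a full spherical shell at infinitely many radii, every path joining a component of $V_p^0$ at radius below $r_k$ to one above $r_k$ must cross $V_q^0$, and no connected enlargement $V_p\supset A_p$ disjoint from $V_q^0$ (let alone at linear distance from an enlarged $V_q$) can exist. The simultaneity issue you mention --- that each enlargement must also dodge the other's bridges --- is a second unresolved instance of the same problem.

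The paper avoids this by never attempting to connect up all of $A_p$. It first covers $\nu_L\Bbb R^n$ by finitely many cone-continua $W_{\theta_l}=\Cl{h_L\Bbb R^n}\tilde{C}_{\theta_l}^{\alpha}\cap\nu_L\Bbb R^n$, where $C_{\theta}^{\alpha}$ is the linear cone around the ray $l_{\theta}$ (Proposition \ref{prop:cover} makes these a cover). Then either some such cover already separates $z$ and $w$ by disjoint $W_{\theta_i},W_{\theta_j}$ --- in which case mutual aposyndesis is immediate --- or for every $\alpha>0$ both points lie in a single $\Cl{h_L\Bbb R^n}\tilde{C}_{p}^{\alpha}$. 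In the latter case the continuum neighborhoods are built not from $U\cap\Bbb R^n$ and $V\cap\Bbb R^n$ but from the angularly thin sets $A=\bigcup\{N_d(x,\alpha|x|)\mid x\in l_p,\ N_d(x,\alpha|x|)\cap U\ne\emptyset\}$ and the analogous $B$; their closures are still neighborhoods of $z$ and $w$ precisely because $\tilde{C}_{p}^{\alpha}$ is itself a neighborhood of both points. Since $A$ and $B$ live inside one thin cone and their components occupy disjoint radial annuli, they can be joined to two auxiliary rays $l_{p_A}$, $l_{p_B}$ on opposite sides of that cone by arcs confined to those annuli, and the resulting connected sets $M$, $N$ still diverge linearly. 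This reduction to a common thin cone --- which is exactly what neutralizes the sphere obstruction above --- is the idea your proposal is missing; without it the construction breaks down, so the argument as written does not prove the theorem.
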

\begin{proof}
Let the origin $O$ of $\Bbb R^n$ be a base point. 
We denote a point $x\in\Bbb R^n$ by the polar coordinates $x=(r,\theta)$,
 where $r$ is the distance to the origin, i.e., $r=|x|$,
 and $\theta$ represents a point on the unit sphere $S^{n-1}$ centered at $O$.
For brevity, let $X=\Bbb R^n$.
 
For any small $\alpha>0$ and $\theta\in S^{n-1}$, let
$l_{\theta}=\{(r,\theta)~|~r\geq1\}$,
$B_{\theta}^{\alpha}=B_d(\theta,\alpha)=\Int{X}N_d(\theta,\alpha)$, and
$$C_{\theta}^{\alpha}=\bigcup\{B_d(x,\alpha|x|)~|~x\in l_{\theta}\}
 =\bigcup\{B_d((r,\theta),\alpha r)~|~r\geq1\},$$
where $x=(r,\theta)$ and $(1,\theta)=\theta$.
Note that $\Cl{X}C_{\theta}^{\alpha}=\bigcup\{N_d(x,\alpha|x|)~|~x\in l_{\theta}\}$
 and $\Cl{h_LX}C_{\theta}^{\alpha}\cap\nu_LX
 =\bigcap\{ \Cl{h_LX}C_{\theta}^{\alpha}\pois B_d(O,m)~|~m\in\Bbb N\}$
 is a subcontinuum of $\nu_LX$.

{\bf Fact}. $\Cl{h_LX}\tilde{C}_{\theta}^{\alpha}=\Cl{h_LX}C_{\theta}^{\alpha}$.

Indeed, $\Cl{h_LX}\tilde{C}_{\theta}^{\alpha}\supset\Cl{h_LX}C_{\theta}^{\alpha}$
 since $\tilde{C}_{\theta}^{\alpha}\supset C_{\theta}^{\alpha}$.
For any $x\in\Cl{h_LX}\tilde{C}_{\theta}^{\alpha}$ and any neighborhood $V$ of $x$ in $h_LX$,
 we have $V\cap C_{\theta}^{\alpha}=V\cap\tilde{C}_{\theta}^{\alpha}\cap X\ne\emptyset$
 because $X$ is dense in $h_LX$.
So $x\in\Cl{h_LX}C_{\theta}^{\alpha}$, which implies $\Cl{h_LX}\tilde{C}_{\theta}^{\alpha}\subset\Cl{h_LX}C_{\theta}^{\alpha}$.

{\bf Fact}. $\Cl{h_LX}\Cl{X}C_{\theta}^{\alpha}=\Cl{h_LX}\tilde{C}_{\theta}^{\alpha}$.

Indeed, by \cite{AT},
 $h_LX$ is a perfect compactification, and by Proposition \ref{prop:perfectcpt} (3),
$\Cl{h_LX}\Fr{X}C_{\theta}^{\alpha}=\Fr{h_LX}\tilde{C}_{\theta}^{\alpha}$.
Then
\begin{align*}
\Cl{h_LX}\Cl{X}C_{\theta}^{\alpha}
&=\Cl{h_LX}(C_{\theta}^{\alpha}\cup\Fr{X}C_{\theta}^{\alpha})
=\Cl{h_LX}C_{\theta}^{\alpha}\cup\Cl{h_LX}\Fr{X}C_{\theta}^{\alpha}\\
&=\Cl{h_LX}\tilde{C}_{\theta}^{\alpha}\cup\Fr{h_LX}\tilde{C}_{\theta}^{\alpha}
=\Cl{h_LX}\tilde{C}_{\theta}^{\alpha}. 
\end{align*}

Since $S^{n-1}$ is a compact,
 there exists a finite subset $\{\theta_l\}_{l=1}^k$ in $S^{n-1}$ such that
 $\{B_{\theta_l}^{\alpha}\}_{l=1}^k$ is an open cover of $S^{n-1}$ in $X$.
Consider a finite family $\mathcal{C}=\{C_{\theta_l}^{\alpha}\}_{l=1}^k$.

{\bf Fact}. The Lebesgue function $L^{\mathcal{C}}$ is at least linear.

Indeed, let $x=(r,p)\in X$ be any point.
By \cite[Lemma 1.1.1]{Mill},
 there exists $\eta>0$ such that
 $B_d(p,\eta)$ is contained in an element of $\{B_{\theta_l}^{\alpha}\}_{l=1}^k$.
Let $B_d(p,\eta)\subset B_{\theta_{l'}}^{\alpha}$ for an $l'$ with $1\leq l'\leq k$. 
Then for any $r\geq1$ we have $B_d((r,p),\eta r)\subset B_d((r,\theta_{l'}),\alpha r)$ by dilatation,
 so $B_d((r,p),\eta r)\subset C_{\theta_{l'}}^{\alpha}$.
Hence $B_d(x,\eta |x|)\subset C_{\theta_{l'}}^{\alpha}$
 which implies that $L^{\mathcal{C}}(x)\geq \eta|x|$, thus $L^{\mathcal{C}}$ is at least linear.

By Proposition \ref{prop:cover},
 $\tilde{\mathcal{C}}=\{\tilde{C}_{\theta_l}^{\alpha}\}_{l=1}^k$ covers $\nu_LX$.
Hence, for any $z\in\nu_LX$, there exists $l$ with $1\leq l\leq k$ such that $z\in\tilde{C}_{\theta_l}^{\alpha}$ and
 $W_{\theta_l}=\Cl{h_LX}\tilde{C}_{\theta_l}^{\alpha}\cap\nu_LX$ is a continuum neighborhood of $z$ in $\nu_LX$.

Let $z,~w\in\nu_LX$ be distinct points.
Consider the following statement:
\begin{itemize}
\item[$(\ast)_{(z,w)}$] there exist $\alpha>0$ and a finite subset $\{\theta_l\}_{l=1}^k$ in $S^{n-1}$
 such that $z\in W_{\theta_i}$, $w\in W_{\theta_j}$, $W_{\theta_i}\cap W_{\theta_j}=\emptyset$,
 where $1\leq i\leq k$, $1\leq j\leq k$, $i\ne j$.
\end{itemize}
If $(\ast)_{(z,w)}$ holds, then we can conclude that $\nu_LX$ is a mutually aposyndetic continuum.

If $(\ast)_{(z,w)}$ does not hold,
 then for any $\alpha>0$, any $\{\theta_i\}_{l=1}^k$ in $S^{n-1}$,
 we have $W_{\theta_i}\cap W_{\theta_j}\ne\emptyset$
 whenever $z\in W_{\theta_i}$ and $w\in W_{\theta_j}$.
Since 
 $\Cl{h_LX}\tilde{C}_{\theta_i}^{\alpha}\cap X=\Cl{X}C_{\theta_i}^{\alpha}$ and 
 $\Cl{h_LX}\tilde{C}_{\theta_j}^{\alpha}\cap X=\Cl{X}C_{\theta_j}^{\alpha}$,
 by Proposition \ref{prop:l_div},
 $\{\Cl{X}C_{\theta_i}^{\alpha},\Cl{X}C_{\theta_j}^{\alpha}\}$
 does not diverge as a linear function.
If $\Cl{X}C_{\theta_i}^{\alpha}\cap\Cl{X}C_{\theta_j}^{\alpha}=\emptyset$,
 then $\varepsilon=d(B_{\theta_i}^{\alpha},B_{\theta_j}^{\alpha})>0$,
 so, by dilatation,
  $d(\Cl{X}C_{\theta_i}^{\alpha}\pois B_d(O,r),\Cl{X}C_{\theta_j}^{\alpha}\pois B_d(O,r))\geq\varepsilon r$
 whenever $r\geq1$.
By Lemma \ref{lem:r_subl_div}, $\{\Cl{X}C_{\theta_i}^{\alpha},\Cl{X}C_{\theta_j}^{\alpha}\}$
 diverges as a linear function, a contradiction.
Thus $\Cl{X}C_{\theta_i}^{\alpha}\cap\Cl{X}C_{\theta_j}^{\alpha}\ne\emptyset$.
Then there exists $p\in\Cl{X}B_{\theta_i}^{\alpha}\cap\Cl{X}B_{\theta_j}^{\alpha}$ such that 
 $\Cl{X}C_{\theta_i}^{\alpha}\cup\Cl{X}C_{\theta_j}^{\alpha}
 \subset\Cl{X}C_{p}^{2\alpha}$,
 so, we have
 $\Cl{h_LX}\Cl{X}C_{\theta_i}^{\alpha}\cup\Cl{h_LX}\Cl{X}C_{\theta_j}^{\alpha}
  \subset\Cl{h_LX}\Cl{X}C_{p}^{2\alpha}$, 
  hence $\Cl{h_LX}\tilde{C}_{\theta_i}^{\alpha}\cup\Cl{h_LX}\tilde{C}_{\theta_j}^{\alpha}
  \subset\Cl{h_LX}\tilde{C}_{p}^{2\alpha}$. 
Hence $W_{\theta_i}\cup W_{\theta_j}\subset\Cl{h_LX}\tilde{C}_{p}^{2\alpha}\cap\nu_LX$.
This implies that,
 if $(\ast)_{(z,w)}$ does not hold, then for any $\alpha>0$ there exists $p\in S^{n-1}$
 such that $z,~w\in\Cl{h_LX}\tilde{C}_{p}^{\alpha}=\Cl{h_LX}\Cl{X}C_{p}^{\alpha}$
 and $\Cl{h_LX}\tilde{C}_{p}^{\alpha}$ is a closed neighborhood of $z$ and $w$.

From now on, assume that $(\ast)_{(z,w)}$ does not hold.
Since $h_LX$ is compact Hausdorff,
 there exist closed neighborhoods $U$ and $V$ of $z$ and $w$,
 respectively, in $h_LX$ such that $U\cap V=\emptyset$.
By Proposition \ref{prop:l_div},
 $U\cap X$ and $V\cap X$ diverges as a linear function,
 hence there exist $\alpha>0$ and $r_0>0$ such that
 $\max\{d(x,U\cap X),d(x,V\cap X)\}\geq4\alpha|x|$ whenever $|x|\geq r_0$.
We may assume $\alpha<1/5$.
Moreover, there exists $p\in S^{n-1}$ such that
 $z,~w\in\tilde{C}_{p}^{\alpha}\subset\Cl{h_LX}\tilde{C}_{p}^{\alpha}=\Cl{h_LX}\Cl{X}C_{p}^{\alpha}$.
Note that $z\in U\cap\Cl{h_LX}\Cl{X}C_{p}^{\alpha}$ and $w\in V\cap\Cl{h_LX}\Cl{X}C_{p}^{\alpha}$.
Let $\mathcal{W}=\{N_d(x,\alpha|x|)~|~x\in X\}$.
By Lemma \ref{lem:div},
$$\max\{d(x,\St{U\cap X}{W}),d(x,\St{V\cap X}{W})\}\geq\alpha|x|$$
 whenever $|x|\geq r_0$.
Set $A=\bigcup\{N_d(x,\alpha|x|)~|~x\in l_p,\ N_d(x,\alpha|x|)\cap U\ne\emptyset\}$ and
 $B=\bigcup\{N_d(x,\alpha|x|)~|~x\in l_p,\ N_d(x,\alpha|x|)\cap V\ne\emptyset\}$.
Since $z\in\Int{h_LX}U\cap\tilde{C}_{p}^{\alpha}$ and $w\in\Int{h_LX}V\cap\tilde{C}_{p}^{\alpha}$,
 $A$ and $B$ are not empty and $\sup\{|a|~|~a\in A\}=\sup\{|b|~|~b\in B\}=\infty$ because $X$ is dense in $h_LX$.
Note that $A\subset\St{U\cap X}{W}$ and $B\subset\St{V\cap X}{W}$,
 so $\max\{d(x,A),d(x,B)\}\geq\alpha|x|$ whenever $|x|\geq r_0$.
Moreover, $U\cap\Cl{X}C_{p}^{\alpha}\subset A$ and $V\cap\Cl{X}C_{p}^{\alpha}\subset B$, 
 hence $\Cl{h_LX}A$ and $\Cl{h_LX}B$ are closed neighborhoods
  of $z$ and $w$, respectively.

For any subset $Y$ of $X$, we denote that
 $r_Y=\inf\{|y|~|~y\in Y\}$, $r^Y=\sup\{|y|~|~y\in Y\}$ and
 $T(Y)=\{(r,\theta)~|~r_Y\leq r\leq r^Y,\ \theta\in S^{n-1}\}$.
Let $\{A_i~|~i\in\Bbb N\}$ and $\{B_i~|~i\in\Bbb N\}$ 
 be collections of components of $A$ and $B$, respectively.
We may assume that $r^{A_i}<r_{A_j}$ and $r^{B_i}<r_{B_j}$ if $i<j$.
Define $A'=\bigcup_{i\in\Bbb N}T(A_i)$ and $B'=\bigcup_{i\in\Bbb N}T(B_i)$. 
Observe that $\max\{d(x,A'),d(x,B')\}\geq\alpha|x|$ whenever $|x|\geq r_0$.
Indeed, if there exist a sequence $\{x_i\}\subset X$ such that $\lim_{i\to\infty}|x_i|=\infty$ and
 $\max\{d(x_i,A'),d(x_i,B')\}<\alpha|x_i|$,
 by symmetry, for any $z_i=(|x_i|,p)$, we have $\max\{d(z_i,A),d(z_i,B)\}<\alpha|z_i|$, a contradiction.

Take points $p_A$, $p_B\in S^{n-1}$ such that each of
 $N_d(p_A,\alpha)\cap N_d(p,\alpha)$ and
 $N_d(p_B,\alpha)\cap N_d(p,\alpha)$ is one point, and
 $N_d(p_A,\alpha)\cap N_d(p_B,\alpha)=\emptyset$.
Then $\varepsilon=d(B_{p_A}^{\alpha},B_{p_B}^{\alpha})>0$.
By dilatation, 
 $d(\Cl{X}C_{p_A}^{\alpha}\pois B_d(O,r),\Cl{X}C_{p_B}^{\alpha}\pois B_d(O,r))\geq\varepsilon r$
 whenever $r\geq1$.
By Lemma \ref{lem:r_subl_div} and Proposition \ref{prop:l_div},
 $\{\Cl{X}C_{p_A}^{\alpha},\Cl{X}C_{p_B}^{\alpha}\}$ diverges as a linear function.
Similarly, $\{l_{p_A},\Cl{X}C_{p}^{\alpha}\}$, and $\{l_{p_B},\Cl{X}C_{p}^{\alpha}\}$
 diverge as a linear function, respectively.

Since $N_d(p_A,\alpha)\cap N_d(p,\alpha)$ is a point, say $a\in X$,
 we have $\Cl{X}C_{p_A}^{\alpha}\cap\Cl{X}C_{p}^{\alpha}\supset\{(r,a/|a|)~|~r\geq1\}$,
 where $a/|a|\in S^{n-1}$ as a unit vector.
Then there exists an arc $K_i$ in $\Cl{X}C_{p_A}^{\alpha}\cap T(A_i)$ connecting
 $l_{p_A}$ and $A_i$.
Similarly, there exists an arc $L_i$ in $\Cl{X}C_{p_B}^{\alpha}\cap T(B_i)$ connecting
 $l_{p_B}$ and $B_i$.
Let $M=l_{p_A}\cup A\cup\bigcup_{i\in\Bbb N}K_i$ and $N=l_{p_B}\cup B\cup\bigcup_{i\in\Bbb N}L_i$.
Note that $M$ and $N$ are connected, respectively,
 hence
 $\Cl{h_LX}M\cap\nu_LX$ and $\Cl{h_LX}N\cap\nu_LX$ are connected, respectively.
Since $\{l_{p_A},\Cl{X}C_{p}^{\alpha}\}$ diverges as a linear function
 and $B\subset\Cl{X}C_{p}^{\alpha}$,
 we have that $\{l_{p_A},B\}$ diverges as a linear function.
Since $\{\Cl{X}C_{p_A}^{\alpha},\Cl{X}C_{p_B}^{\alpha}\}$ diverges as a linear function,
 and $l_{p_A}\subset\Cl{X}C_{p_A}^{\alpha}$ and $l_{p_B}\cup\bigcup_{i\in\Bbb N}L_i\subset\Cl{X}C_{p_B}^{\alpha}$,
 we have that $\{l_{p_A},l_{p_B}\cup\bigcup_{i\in\Bbb N}L_i\}$ diverges as a linear function.
Since $\{A',B'\}$ diverges as a linear function,
 and $A\cup\bigcup_{i\in\Bbb N}K_i\subset A'$ and $B\cup\bigcup_{i\in\Bbb N}L_i\subset B'$,
 we have that $\{A\cup\bigcup_{i\in\Bbb N}K_i,B\cup\bigcup_{i\in\Bbb N}L_i\}$ diverges as a linear function.
Since $\{\Cl{X}C_{p_A}^{\alpha}\cup\Cl{X}C_{p}^{\alpha},l_{p_B}\}$ diverges as a linear function,
 and $A\cup\bigcup_{i\in\Bbb N}K_i\subset\Cl{X}C_{p_A}^{\alpha}\cup\Cl{X}C_{p}^{\alpha}$,
 we have that $\{A\cup\bigcup_{i\in\Bbb N}K_i,l_{p_B}\}$ diverges as a linear function.
Therefore, $\{M,N\}$ diverges as a linear function, 
 which implies $\Cl{h_LX}M\cap\Cl{h_LX}N=\emptyset$.
Since $\Cl{h_LX}M\cap\nu_LX$ and $\Cl{h_LX}N\cap\nu_LX$
 are continuum neighborhoods of $z$ and $w$, respectively,
 we conclude that $\nu_LX$ is a mutually aposyndetic continuum.
\end{proof}

By Proposition \ref{prop:notLC} and Theorem \ref{thm:m-apo}, we have the following.

\begin{cor}
For every $n\in\Bbb N$ with $n\geq2$,
$\nu_L\Bbb R^n$ is not locally connected at any point and is mutually aposyndetic.
\end{cor}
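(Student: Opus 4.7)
The plan is direct: this corollary is the conjunction of two statements already proved in this section about the same object $\nu_L\Bbb R^n$ for $n\geq 2$, so my task reduces to verifying that each ingredient applies and then combining them.

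For the first half --- that $\nu_L\Bbb R^n$ is not locally connected at any point --- I would invoke Proposition~\ref{prop:notLC}, which delivers this conclusion for the sublinear Higson corona of any noncompact proper metric space. The only thing to check is that $(\Bbb R^n,d)$ with the usual Euclidean metric satisfies the two hypotheses: noncompactness is immediate, and properness is Heine--Borel (every closed Euclidean ball is closed and bounded, hence compact). Note that this half actually works for all $n\geq 1$; the restriction $n\geq 2$ is not needed for it.

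For the second half --- mutual aposyndesis --- I would simply cite Theorem~\ref{thm:m-apo}, which is precisely that statement for $n\geq 2$. Conjoining the two conclusions yields the corollary.

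There is no genuine obstacle at this step, because the substantive work has already been carried out upstream: the construction of a Higson sublinear function with exactly two cluster values on a prescribed closed neighborhood for Proposition~\ref{prop:notLC}, and the delicate cone-and-arc construction together with the star-covering arguments (Lemma~\ref{lem:div} and Proposition~\ref{prop:cover}) for Theorem~\ref{thm:m-apo}. The corollary is essentially a one-line packaging of these two results in the Euclidean setting, and the only reason the hypothesis $n\geq 2$ appears in the final statement is that mutual aposyndesis can fail in dimension one (as is the case for $\nu_L[0,\infty)$, which is even indecomposable by the earlier result of \cite{AT}).
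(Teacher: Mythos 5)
Your proposal is correct and matches the paper exactly: the corollary is obtained by combining Proposition~\ref{prop:notLC} (applied to the noncompact proper metric space $\Bbb R^n$) with Theorem~\ref{thm:m-apo}, which is precisely what the paper does. Your side remarks --- that properness follows from Heine--Borel and that the hypothesis $n\geq2$ is only needed for the mutual aposyndesis half --- are accurate and consistent with the text.
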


\begin{prop}
Let $Y=\Bbb R\times[0,\infty)\subset\Bbb R^2$ with a subspace usual metric.
Then $\nu_LY$ is not homeomorphic to $\nu_L\Bbb R^2$.
\end{prop}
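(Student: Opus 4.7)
The plan is to distinguish the two coronas by mutual aposyndesis: $\nu_L\Bbb R^2$ is mutually aposyndetic by Theorem~\ref{thm:m-apo}, so it suffices to show $\nu_L Y$ is \emph{not} mutually aposyndetic. The asymmetry to exploit is that the set of asymptotic directions of $Y$ is the closed upper semicircle of $S^1$---a compact arc with the two boundary endpoints $\theta_\pm=(\pm1,0)$---whereas for $\Bbb R^2$ it is the whole circle. The proof of Theorem~\ref{thm:m-apo} demands, at every relevant direction $p$, two directions $p_A,p_B\in S^1$ with $N_d(p_A,\alpha)\cap N_d(p_B,\alpha)=\emptyset$ and each touching $N_d(p,\alpha)$ in a single point; this is available for every $p\in S^1$ inside $\Bbb R^2$ but fails at $\theta_\pm$ inside $Y$.

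First I would extract a nondegenerate indecomposable subcontinuum $K_+\subset\nu_L Y$ coming from the boundary ray $l_{\theta_+}=\{(r,\theta_+):r\ge 1\}$. Using the $1$-Lipschitz retraction $r\colon Y\to l_{\theta_+}$, $r(x,y)=(\max(|(x,y)|,1),0)$, any bounded continuous $g$ on $l_{\theta_+}$ satisfying $(\ast)_s$ for all $s\in\mathcal L$ lifts to $g\circ r\in C_L(Y)$, since the sublinear Higson property passes through a $1$-Lipschitz map. Hence the restriction $C_L(Y)\to C_L(l_{\theta_+})$ is surjective, which identifies $\Cl{h_L Y}(l_{\theta_+})$ (as a compactification of $l_{\theta_+}$) with $h_L l_{\theta_+}$; consequently $K_+=\Cl{h_L Y}(l_{\theta_+})\cap\nu_L Y\cong\nu_L l_{\theta_+}\cong\nu_L[0,\infty)$, which is indecomposable by \cite{AT}.

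Now pick distinct $z,w\in K_+$ and suppose for contradiction that disjoint subcontinua $P,Q\subset\nu_L Y$ exist with $z\in\Int{\nu_L Y}P$ and $w\in\Int{\nu_L Y}Q$. Lift to disjoint closed neighborhoods $U,V$ of $z,w$ in $h_L Y$ with $U\cap\nu_L Y\subset P$ and $V\cap\nu_L Y\subset Q$; by Proposition~\ref{prop:l_div}, $\{U\cap Y,V\cap Y\}$ diverges as a linear function, producing constants $\alpha\in(0,1/5)$ and $r_0>0$. Following the $(\ast)_{(z,w)}$-part of the proof of Theorem~\ref{thm:m-apo}, since $z,w\in K_+$ the only shared direction $p$ is $\theta_+$, and the construction of disjoint continua $M\ni z$, $N\ni w$ via two side directions $p_A,p_B$ collapses because only the interior side of $\theta_+$ lies within $Y$'s arc of directions.

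The main obstacle is upgrading this constructional breakdown into a genuine contradiction. My proposed route is to show that any subcontinuum of $\nu_L Y$ containing a point of $K_+$ in its $\nu_L Y$-interior must in fact contain all of $K_+$: applied to both $P$ and $Q$, this yields $w\in K_+\subset P\cap Q$, contradicting $P\cap Q=\emptyset$. The absorption claim reduces to showing that the connected component through $z$ of $P\cap K_+$ has nonempty $K_+$-interior---for then indecomposability of $K_+$ (every proper subcontinuum being nowhere dense) forces this component to equal all of $K_+$. Verifying this $K_+$-interior property, despite $K_+$ being nowhere locally connected, should follow from a Lemma~\ref{lem:div}-style divergence analysis restricted to the half-cone of $Y$ around $\theta_+$, where the absence of a far-side direction prevents $P$ from locally detaching from $K_+$.
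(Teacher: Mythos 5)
Your overall strategy---reducing to the failure of mutual aposyndesis in $\nu_LY$, which Theorem \ref{thm:m-apo} rules out for $\nu_L\Bbb R^2$---is the same as the paper's, and your identification of $K_+=\Cl{h_LY}l_{\theta_+}\cap\nu_LY$ with $\nu_L[0,\infty)$ via the $1$-Lipschitz retraction is correct and a nice observation. But the entire argument rests on the ``absorption'' claim (every subcontinuum of $\nu_LY$ whose interior meets $K_+$ must contain all of $K_+$), and this is never established. Worse, the proposed reduction is circular: since $K_+$ is indecomposable, \emph{every} component of a proper closed subset of $K_+$ is a proper subcontinuum (or a point) and hence nowhere dense, so the component of $P\cap K_+$ through $z$ has nonempty $K_+$-interior \emph{if and only if} it already equals $K_+$, i.e., if and only if $K_+\subset P$. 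You have restated the goal, not reduced it. The closing appeal to ``a Lemma \ref{lem:div}-style divergence analysis'' is where the actual proof would have to live, and no mechanism is offered for converting a divergence estimate on subsets of $Y$ into control of the \emph{component} structure of $P\cap K_+$; divergence arguments govern whether closures of subsets of $Y$ meet in $\nu_LY$, not how a subcontinuum of the corona intersects $K_+$. Likewise, observing that the construction of $M$ and $N$ in Theorem \ref{thm:m-apo} ``collapses'' at $\theta_+$ only shows that one particular construction fails to transfer; it is not evidence that disjoint continuum neighborhoods cannot exist.

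The paper closes this gap by an entirely different, planar argument. It takes $x,y\in\Cl{h_LY}L\pois L$ with $L=\{(t,0)~|~t\geq0\}$, supposes disjoint continuum neighborhoods, and passes to disjoint connected open sets $U',V'$ in $h_LY$; perfectness of $h_LY$ together with Proposition \ref{prop:perfectcpt} makes $U'\cap Y$ connected, hence arcwise connected, as an open subset of $Y$. Choosing interlacing sequences $x_n<y_n<x_{n+1}$ on $L$ with $(x_i,0)\in U'$ and $(y_i,0)\in V'$, an arc $K_n\subset U'\cap Y$ from $(x_n,0)$ to $(x_{n+1},0)$ together with the boundary segment forms a simple closed curve separating a point of $V'$ near $(y_n,0)$ from $(y_{n+1},0)$ within the half-plane, so the Jordan curve theorem forces $K_n\cap V'\ne\emptyset$, a contradiction. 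The special role of the boundary ray is thus exploited through classical plane topology rather than through indecomposability of its corona; to salvage your route you would need an independent proof of the absorption property, which looks at least as hard as the proposition itself.
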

\begin{proof}
We shall prove that $\nu_LY$ is not mutually aposyndetic.
For the Stone-\v{C}ech compactification,
 similar way is mentioned without proof in \cite[p.50]{DG}.
Let $L=\{(t,0)~|~t\geq0\}\subset Y$ and
 $x$, $y$ be distinct points in $\Cl{h_LY}L\pois L$.
If $\nu_LY$ is mutually aposyndetic,
 then there exist disjoint continuum neighborhoods $U$, $V$ of $x$, $y$, respectively, in $\nu_LY$.
Then there exist disjoint connected open neighborhoods
 $U'$ and $V'$ in $h_LY$ such that $U\subset U'$ and $V\subset V'$.
Since $x\in(\Cl{h_LY}L\pois L)\cap U'$ and $y\in(\Cl{h_LY}L\pois L)\cap V'$,
 there exist sequences $\{(x_i,0)\}_{i\in\Bbb N}\subset L\cap U'$
 and $\{(y_i,0)\}_{i\in\Bbb N}\subset L\cap V'$
 such that $\lim_{i\to\infty}x_i=\lim_{i\to\infty}y_i=\infty$.
There is no loss of generality that $x_n<y_n<x_{n+1}$ for any $n\in\Bbb N$.
By \cite{AT}, $h_LY$ is a perfect compactification of $Y$,
 and Proposition \ref{prop:perfectcpt},
$U'\cap Y$ is connected, hence $U'\cap Y$ is arcwise connected
 because $U'\cap Y$ is an open subset in $Y$.
Then there exists an arc $K_n$ connecting $(x_n,0)$ and $(x_{n+1},0)$ in $U'\cap Y$
 satisfying $K_n\cap L=\{(x_n,0),(x_{n+1},0)\}$.
Observe that $K_n'=K_n\cup\{(x,0)~|~x_n\leq x\leq x_{n+1}\}$ is a simple closed curve, and
 the bounded component of $Y\pois K_n'$ has a point which is arbitrarily close to $(y_n,0)$
 and $(y_{n+1},0)$ belongs to the unbounded component of $Y\pois K_n'$.
By the Jordan curve theorem,
 we have $K_n\cap V'\ne\emptyset$, that is, $U'\cap V'\ne\emptyset$.
This contradicts $U'\cap V'=\emptyset$.  
\end{proof}

{\small }

\end{document}